\setlist[itemize]{leftmargin=20pt}
\newcommand{\N}{\ensuremath{\mathbb{N}}}
\newcommand{\R}{\ensuremath{\mathbb{R}}}
\newcommand{\mc}{\mathcal}
\DeclarePairedDelimiter\abs{\lvert}{\rvert}
\DeclarePairedDelimiter\cbrace\{\}
\DeclarePairedDelimiter\ha()
\DeclarePairedDelimiter{\ip}\langle\rangle
\DeclarePairedDelimiter{\nrm}\lVert\rVert
\newcommand{\nrmb}[1]{\bigl\|#1\bigr\|}
\newcommand{\hab}[1]{\bigl(#1\bigr)}
\newcommand{\cbraceb}[1]{\bigl\{#1\bigr\}}
\newcommand{\ipb}[1]{\bigl\langle#1\bigr\rangle}
\newcommand{\nrms}[1]{\Bigl\|#1\Bigr\|}
\newcommand{\has}[1]{\Bigl(#1\Bigr)}
\newcommand{\cbraces}[1]{\Bigl\{#1\Bigr\}}
\DeclareMathOperator{\ind}{\mathbf{1}}
\DeclareMathOperator{\UMD}{UMD}
\DeclareMathOperator{\ch}{ch}
\DeclareMathOperator{\parent}{\pi}
\newcommand{\dd}{\hspace{2pt}\mathrm{d}}
\newcommand{\dx}{\mathrm{d}x}
\newcommand{\comp}{\mathsf{c}}
\newcommand{\cdotroomy}{\,\cdot\,}
\renewcommand{\l}{\ensuremath{\ell}}
\newtheorem{theorem}{Theorem}
\newtheorem{corollary}[theorem]{Corollary}
\newtheorem{lemma}[theorem]{Lemma}
\newtheorem{proposition}[theorem]{Proposition}
\newtheorem{problem}[theorem]{Problem}
\newenvironment{theoremnumber}[1]
{\innercustomthm}{\endinnercustomthm}
\theoremstyle{remark}
\newtheorem*{remark}{Remark}
\newtheorem*{example}{Example}
\theoremstyle{definition}
\newtheorem{definition}[theorem]{Definition}
\numberwithin{theorem}{section}
\numberwithin{equation}{section}
\title[Sparse domination for the lattice maximal operator]{Sparse domination for the lattice Hardy--Littlewood maximal operator}
\author{Timo S. H\"anninen and Emiel Lorist}
\thanks{T.S.H. is supported by the Academy of Finland (Funding Decision No 297929). He is a member of the Finnish Centre of Excellence in Analysis and Dynamics Research. E.L. is supported by the VIDI subsidy 639.032.427 of the Netherlands Organisation for Scientific Research (NWO)}
\address{Department of Mathematics and Statistics \\ University of Helsinki \\ P.O. Box 68 \\FI-00014 Helsinki \\ Finland}
\email{timo.s.hanninen@helsinki.fi}
\address{Delft Institute of Applied Mathematics \\ Delft University of Technology \\ P.O. Box 5031\\ 2600 GA Delft \\The Netherlands}
\email{e.lorist@tudelft.nl}
\begin{document}
  \begin{abstract}
  We study the domination of the lattice Hardy--Littlewood maximal operator by sparse operators in the setting of general Banach lattices. We prove that the admissible exponents of the dominating sparse operator are determined by the $q$-convexity of the Banach lattice.
\end{abstract}

\keywords{Hardy--Littlewood maximal operator, sparse domination, Banach lattice, $p$-convexity,  Muckenhoupt weights}

\subjclass[2010]{Primary: 42B25; Secondary: 46E30, 46B42}


\maketitle

\section{Introduction}Various complicated operators of harmonic analysis can be dominated by simple {\it sparse operators} and, via such domination, weighted estimates for them follow from estimates for sparse operators. This approach, in its essence, was initiated by Lerner by his median oscillation decomposition \cite{Le10}. Its early highlight was the domination of Calder\'{o}n--Zygmund operators by sparse operators by Lerner \cite{Le13b,Le13a}. This domination yielded an alternative, simple proof of the $A_2$ theorem, which was originally proved by Hyt\"onen \cite{Hy12}. Since then, a wide variety of operators has been dominated by sparse operators (or, more generally, {\it sparse forms}). We refer the reader to the introductions, for example, in \cite{BFP16,CCDO17, Le16, LMR17, LN15} for an overview of this vast field.

We study pointwise domination of the {\it lattice Hardy--Littlewood maximal operator} by {\it sparse operators} in the setting of general Banach lattices. Let $E$ be a Banach lattice, that is, a Banach space and a lattice such that both the structures are compatible. An important class of Banach lattices is the class of Banach function spaces, with the pointwise order as the lattice partial order.  For an introduction to Banach lattices, see for example \cite{LT79,MN91}. Let $\mu$ be a locally finite Borel measure on $\R^d$, and $\mc{D}$ be a finite collection of dyadic cubes in $\R^d$. A subcollection $\mc{S}\subseteq\mc{D} $ of dyadic cubes is called {\it sparse} if for every $S\in \mc{S}$ there exists a subset $E_S\subseteq S$ such that $\mu(E_S)\geq \frac12\mu(S)$ and such that the sets $\{E_S\}_{S\in\mc{S}}$ are pairwise disjoint. The operators of study are defined as follows:
\begin{itemize}
\item The {\it dyadic lattice Hardy--Littlewood maximal operator} $\widetilde{M}^\mu_{\mc{D}}$ is defined as follows: For a locally integrable function $f\colon \R^d \to E$, we set
\begin{equation*}
\widetilde{M}^\mu_{\mc{D}}f(x):=\sup_{Q\in\mc{D}} \ip{\abs{f}}_Q^\mu \ind_Q(x),\qquad x \in \R^d,
\end{equation*}
where the supremum and the absolute value are taken in the lattice sense, and $\ip{f}_Q^\mu:=\frac{1}{\mu(Q)} \int_Q f \dd \mu$.
\item For an exponent $q \in [1,\infty)$ and a sparse collection  $\mc{S}$ of dyadic cubes, the {\it sparse operator} $\mc{A}^\mu_{q,\mc{S}}$ relevant to our study is defined as follows: For a locally integrable function $f:\R^d \to \mathbb{R}$, we set
\begin{equation*}
\mc{A}^\mu_{q,\mc{S}}f(x):=\big(\sum_{S\in\mc{S}} \hab{\ip{\abs{f}}^\mu_S}^q \ind_S(x) \big)^{1/q}, \qquad x \in \R^d.
\end{equation*}
\end{itemize}

We address the following problem:
\begin{problem}\label{prob:main}For a Banach lattice $E$, for which exponents $q\in[1,\infty)$ can the dyadic  lattice maximal operator $\widetilde{M}^\mu_{\mc{D}}$ be pointwise dominated by a sparse operator $\mc{A}^\mu_{q,\mc{S}}$?
\end{problem}
The pointwise domination is meant in the following sense:
 For each locally integrable function $f\colon \R^d \to E$ and for each finite collection $\mc{D}$ of dyadic cubes there exists a sparse subcollection $\mc{S}\subseteq \mc{D}$ such that
\begin{equation}
\label{eq:domination}
\nrmb{\widetilde{M}^\mu_{\mc{D}}f}_E \leq C_{E,q} \, \mc{A}^\mu_{q,\mc{S}}(\nrm{f}_E) \quad\text{$\mu$-a.e.}
\end{equation}
Note that the larger the exponent $q$, the smaller the dominating sparse operator $\mc{A}^\mu_{q,\mc{S}}$ and hence the problem is to determine the largest possible exponent in the domination.

We study the problem among the Banach lattices $E$ that have the Hardy--Littlewood property. In the most important case that the measure $\mu$ is the Lebesgue measure, this assumption is necessary for the domination, for any domination exponent $q\in[1,\infty)$. The class of Banach lattices with the Hardy--Littlewood property includes all reflexive Lebesgue, Lorentz, and Orlicz spaces. The Hardy--Littlewood property is defined and further discussed in Section \ref{section:preliminaries}.

We find out that the admissible exponents are determined by the geometric property of the lattice $E$ called $q$-convexity. We recall that a Banach lattice $E$ is called {\it $q$-convex}, with $q \in [1,\infty)$, if
$$
\nrms{\big(\sum_{k=1}^n \abs{e_k}^q\big)^{1/q}}_E\leq C_{E,q} \sum_{k=1}^n \big(\nrm{e_k}_E^q\big)^{1/q}
$$
for all $e_1,\cdots,e_n \in E$. More precisely, we show that the exponent $q^*$, defined by $$q^*:=\sup \{ q\in(1,\infty): \text{$E$ is $q$-convex}\},$$ is critical in that the domination \eqref{eq:domination} holds for all $q\in[1,q^*)$ and fails for all $q\in(q^*,\infty)$.

We first study the necessity of $q$-convexity. The main contribution of this article reads as follows:

\begin{theorem}\label{theorem:sharpness}Let $E$ be a Banach lattice, let $\mu$ be a locally finite Borel measure such that $\mu(\R^d)= \infty$ and let $r \in (1,\infty)$. Assume that for each finite collection $\mc{D}$ of dyadic cubes and for each locally integrable function $f:\R^d\to E$ there exists a sparse collection $\mc{S}\subseteq \mc{D}$ such that
$$
\nrmb{\widetilde{M}^\mu_{\mc{D}}f(x)}_E \leq C_{E,q} \,\mc{A}^\mu_{r,\mc{S}}(\nrm{f}_E)(x), \qquad \text{$\mu$-a.e. $x\in \R^d$.}
$$
Then the Banach lattice $E$ is $q$-convex for all exponents $q\in[1,r)$.
\end{theorem}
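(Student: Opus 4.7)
The plan is to first establish the \emph{upper $r$-estimate on pairwise disjoint sequences}: that is, for every pairwise disjoint positive unit vectors $u_1, \ldots, u_n \in E$,
$\nrm{\sum_{k=1}^n u_k}_E \leq C_{E, r}\, n^{1/r}$.
A standard result in Banach lattice theory (Lindenstrauss--Tzafriri, \emph{Classical Banach Spaces II}, \S1.f) then upgrades this one-sided estimate to $q$-convexity of $E$ for every $q \in [1, r)$, yielding the theorem. (Crucially, this passage only recovers $q$-convexity in the open range $q < r$, which is consistent with the theorem's conclusion and makes it clear why the endpoint $q = r$ is excluded.)

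To prove the upper $r$-estimate, fix pairwise disjoint positive unit vectors $u_1, \ldots, u_n \in E$. Using $\mu(\R^d) = \infty$, we build a strictly decreasing chain of dyadic cubes $Q_1 \supsetneq Q_2 \supsetneq \cdots \supsetneq Q_n$ with $\mu(Q_{k+1}) \leq \tfrac12 \mu(Q_k)$; this is possible because among the $2^d$ dyadic children of any cube, at least one has $\mu$-measure at most $\mu(Q)/2$. Set $A_k := Q_k \setminus Q_{k+1}$ for $k < n$ and $A_n := Q_n$, and define the test function $f := \sum_{k=1}^n u_k\, \ind_{A_k}$. Applying the hypothesis to $f$ with the finite collection $\mc{D} := \{Q_1, \ldots, Q_n\}$ produces a sparse $\mc{S} \subseteq \mc{D}$ for which the pointwise domination holds $\mu$-a.e. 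The decisive computation is for $x \in Q_n$. A direct calculation of the averages gives $\ip{\abs{f}}_{Q_j}^\mu = \sum_{k=j}^n \frac{\mu(A_k)}{\mu(Q_j)}\, u_k$, and as $j$ varies over $\{1, \ldots, k\}$ the coefficient of $u_k$ is maximized at $j = k$ with value $\mu(A_k)/\mu(Q_k) \geq 1/2$. Invoking the pairwise disjointness of the $u_k$ via the lattice identity $\bigvee_j \sum_k c_{k,j} v_k = \sum_k (\max_j c_{k,j})\, v_k$ (valid for disjoint positive $v_k \in E$), we conclude $\widetilde{M}^\mu_{\mc{D}} f(x) \geq \tfrac12 \sum_{k=1}^n u_k$, so $\nrm{\widetilde{M}^\mu_{\mc{D}} f(x)}_E \geq \tfrac12 \nrm{\sum_{k=1}^n u_k}_E$. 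On the other hand $\nrm{f}_E = \ind_{Q_1}$, and each $Q_j \in \mc{S}$ satisfies $Q_j \subseteq Q_1$ and $Q_j \ni x$, so $\ip{\ind_{Q_1}}_{Q_j}^\mu = 1$ and therefore $\mc{A}^\mu_{r, \mc{S}}(\nrm{f}_E)(x) = \abs{\mc{S}}^{1/r} \leq n^{1/r}$. Combining these bounds via the sparse domination yields $\nrm{\sum_{k=1}^n u_k}_E \leq 2 C_{E, r}\, n^{1/r}$, which is the required upper $r$-estimate (noting that $(\sum_k \nrm{u_k}_E^r)^{1/r} = n^{1/r}$).

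The main obstacle I anticipate is the lattice computation: the identification of $\widetilde{M}^\mu_{\mc{D}} f(x)$ as an explicit positive combination of the $u_k$ with coefficients bounded below by $1/2$ relies essentially on pairwise disjointness, which is what makes the ``upper $r$-estimate'' (rather than full $r$-convexity) the natural quantity to extract. A secondary point is the closing step: the Banach-lattice implication ``upper $r$-estimate $\Rightarrow$ $q$-convex for every $q < r$'' is standard but loses the endpoint, matching the theorem's range $q \in [1, r)$.
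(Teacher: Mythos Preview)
Your approach is essentially the paper's: test the domination on a function that places pairwise disjoint lattice elements on the annuli of a nested chain of dyadic cubes, extract an upper $r$-estimate for disjoint sequences, and invoke \cite[Theorem~1.f.7]{LT79}. Two small points deserve care. First, your decreasing chain may land on a cube of $\mu$-measure zero (pigeonhole gives a child with measure $\le\tfrac12\mu(Q)$, not necessarily $>0$), and then you cannot select a $\mu$-a.e.\ point $x\in Q_n$ at which to evaluate; the paper instead builds an \emph{increasing} chain $Q_0\subsetneq Q_1\subsetneq\cdots$ with $\mu(Q_{k+1})\ge 2\mu(Q_k)$, which is where the hypothesis $\mu(\R^d)=\infty$ is genuinely used. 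Second, by restricting to unit vectors you obtain only $\nrm{\sum_{k=1}^n u_k}_E\le C\,n^{1/r}$, which is weaker than the standard upper $r$-estimate $\nrm{\sum e_k}_E\le C(\sum\nrm{e_k}_E^r)^{1/r}$ needed to cite \cite[Theorem~1.f.7]{LT79} directly; your version still yields $q$-convexity for every $q<r$ (e.g.\ an Abel summation gives the full upper $s$-estimate for each $s<r$), but the paper gets the full upper $r$-estimate in one stroke by allowing arbitrary disjoint $e_k$, ordering them so that $\nrm{e_1}_E\le\cdots\le\nrm{e_n}_E$, and observing that then $\ip{\nrm{f}_E}_{Q_k}^\mu\le\nrm{e_k}_E$.
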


We then study the sufficiency of $q$-convexity. For the particular Banach lattice $E=\ell^q$, a prototypical example of a $q$-convex lattice, the domination  was obtained by Cruz-Uribe, Martell, and P\'{e}rez  \cite[Section 8]{CMP12}. In this article, we mention how their proof, based on Lerner's median oscillation decomposition, can be extended to general  Banach lattices $E$. We also give an alternative, elementary proof of this domination, via the technique of stopping cubes. In this proof, the lattice-valued setting differs from the scalar-valued setting in that we need to use a lattice-valued generalization of the usual Muckenhoupt--Wheeden {\it principal cubes} stopping condition. The domination in full generality reads as follows:

\begin{theorem}\label{theorem:domination}Let $E$ be a Banach lattice and let $\mu$ be a locally finite Borel measure. Assume that $E$ has the Hardy--Littlewood property and is $q$-convex for some $q\in(1,\infty)$. Then for each finite collection $\mc{D}$ of dyadic cubes and for each locally integrable function $f:\R^d\to E$ there exists a sparse collection $\mc{S}\subseteq \mc{D}$ such that
\begin{equation*}
\nrmb{\widetilde{M}^\mu_{\mc{D}}f(x)}_E \leq C_{E,q} \,\mc{A}^\mu_{q,\mc{S}}(\nrm{f}_E)(x), \qquad \text{$\mu$-a.e. $x\in \R^d$.}
\end{equation*}
\end{theorem}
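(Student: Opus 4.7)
The plan is to follow the ``principal cubes'' stopping-time argument that the authors indicate as the elementary alternative, building $\mc{S}$ as the lattice-valued principal cubes of $f$. The sparse form of the domination arises in two distinct steps: a purely lattice-theoretic pointwise bound by a supremum over $\mc{S}$, followed by a passage from this supremum to the $\ell^q$-sum in which only $q$-convexity of $E$ is used. One may first reduce to the case that $\mc{D}$ has a single top cube $Q_0$.

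I would construct $\mc{S}\subseteq \mc{D}$ inductively: put $Q_0\in\mc{S}$, and for each $S\in\mc{S}$ declare its principal children $\ch(S)\subseteq\mc{D}$ to be the $\subseteq$-maximal $Q\subsetneq S$ in $\mc{D}$ satisfying the lattice-valued Muckenhoupt--Wheeden stopping condition
\[
\ip{\abs{f}}_Q^\mu\not\leq 2\,\ip{\abs{f}}_S^\mu\quad\text{in } E,
\]
and iterate. For every $Q\in\mc{D}$ let $\parent(Q)$ denote the smallest element of $\mc{S}$ containing $Q$. By maximality in the stopping, $\ip{\abs{f}}_Q^\mu\leq 2\,\ip{\abs{f}}_{\parent(Q)}^\mu$ in $E$; taking the lattice supremum over all $Q\in\mc{D}$ with $x\in Q$ then yields
\[
\widetilde{M}^\mu_{\mc{D}}f(x)\leq 2\sup_{\substack{S\in\mc{S}\\ S\ni x}}\ip{\abs{f}}_S^\mu\quad\text{in } E.
\]

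To convert this lattice supremum into the sparse $q$-form, I would use that for nonnegative elements of any Banach lattice $\sup_k a_k\leq \bigl(\sum_k a_k^q\bigr)^{1/q}$. Applying the $E$-norm and then $q$-convexity,
\[
\nrm{\widetilde{M}^\mu_{\mc{D}}f(x)}_E
\leq 2\,\nrms{\bigl(\sum_{S\ni x}(\ip{\abs{f}}_S^\mu)^q\bigr)^{1/q}}_E
\leq 2C_{E,q}\bigl(\sum_{S\ni x}\nrm{\ip{\abs{f}}_S^\mu}_E^q\bigr)^{1/q}.
\]
Jensen's inequality for the $E$-valued Bochner integral then gives $\nrm{\ip{\abs{f}}_S^\mu}_E\leq \ip{\nrm{f}_E}_S^\mu$, so the right-hand side is exactly $2C_{E,q}\,\mc{A}^\mu_{q,\mc{S}}(\nrm{f}_E)(x)$, as required.

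The main obstacle is the sparseness of $\mc{S}$: one needs $\mu\bigl(\bigcup_{Q\in\ch(S)}Q\bigr)\leq \tfrac12\mu(S)$ for every $S\in\mc{S}$. The cubes in $\ch(S)$ lie in the set where the restricted lattice maximal operator satisfies $\widetilde{M}^\mu_{\mc{D},S}(\abs{f}\ind_S)(x)\not\leq 2\,\ip{\abs{f}}_S^\mu$ in $E$, so the problem reduces to a distributional estimate for $\widetilde{M}^\mu$. This is where the Hardy--Littlewood property enters, and it is the genuinely new difficulty compared to the scalar case: the scalar weak-$(1,1)$ argument with sharp constant $1$ does not transfer verbatim, because the relation $\not\leq$ in $E$ carries no quantitative threshold. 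I expect the correct remedy is to replace the stopping constant $2$ by a larger constant $\Lambda=\Lambda(E)$ tuned to the $L^p(E)$-bound of $\widetilde{M}^\mu$ supplied by the Hardy--Littlewood property, and then apply a Chebyshev-type inequality in $L^p(E)$ to obtain the $\tfrac12$ measure bound; the increased constants are harmlessly absorbed into $C_{E,q}$ in the preceding steps.
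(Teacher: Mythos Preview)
Your overall architecture---stopping cubes, then $q$-convexity to pass from the lattice supremum to the $\ell^q$-sum---is the paper's approach. The gap is exactly where you locate it, but your proposed remedy does not work.

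Increasing the constant $\Lambda$ in the \emph{lattice} condition $\ip{\abs{f}}_Q^\mu\not\leq\Lambda\,\ip{\abs{f}}_S^\mu$ does not produce any norm inequality that a Chebyshev argument could exploit. Concretely: take $E=\ell^2$, Lebesgue measure, $S=[0,1)$, $n=2^m$ with $m$ large, and $f=\sum_{j=1}^n n\,\ind_{[(j-1)/n,\,j/n)}\,e_j$. Then $\ip{\abs{f}}_S=\sum_j e_j$, and for every dyadic $Q\subseteq S$ of length $2^{-k}$ (with $k\leq m$) one has $\ip{\abs{f}}_Q=2^k\sum_{j\in J_Q}e_j$ for a block $J_Q$. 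The condition $\ip{\abs{f}}_Q\not\leq\Lambda\,\ip{\abs{f}}_S$ reads simply $2^k>\Lambda$, so the stopping children of $S$ are \emph{all} dyadic intervals at the first generation $k_0$ with $2^{k_0}>\Lambda$, and they cover all of $S$. No choice of $\Lambda$ independent of $f$ makes this sparse, and $\ell^2$ certainly has the Hardy--Littlewood property.

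The paper's fix is to change the stopping condition, not merely its constant: one stops at $S'\subsetneq S$ when the \emph{scalar} inequality
\[
\nrms{\sup_{S'\subseteq Q\subseteq S}\ip{f}_Q^\mu}_E \;>\; 2\,\nrmb{\widetilde{M}^\mu_{\mc{D}}}_{L^1(\mu;E)\to L^{1,\infty}(\mu;E)}\,\ip{\nrm{f}_E}_S^\mu
\]
holds. This compares the $E$-norm of the local maximal function to the scalar average $\ip{\nrm{f}_E}_S^\mu$, so sparseness follows immediately from the weak $L^1$ bound of $\widetilde{M}^\mu_{\mc{D}}$ (this is where the Hardy--Littlewood property enters). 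The price is that the negation no longer gives your lattice bound $\ip{\abs{f}}_Q^\mu\leq C\,\ip{\abs{f}}_{\parent(Q)}^\mu$; it gives instead
\[
\nrms{\sup_{Q:\ \parent(Q)=S}\ip{f}_Q^\mu\,\ind_Q(x)}_E \;\leq\; C\,\ip{\nrm{f}_E}_S^\mu\,\ind_S(x),
\]
but this is exactly the quantity that appears after one first applies $\nrm{\cdotroomy}_{\ell^\infty}\leq\nrm{\cdotroomy}_{\ell^q}$ and $q$-convexity to the outer supremum over $S\in\mc{S}$. In particular, the Jensen step $\nrm{\ip{\abs{f}}_S^\mu}_E\leq\ip{\nrm{f}_E}_S^\mu$ becomes unnecessary: the scalar average $\ip{\nrm{f}_E}_S^\mu$ is built into the stopping threshold from the start.
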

As an immediate corollary of the domination, we mention sharp weighted weak and strong $L^p$-estimates for the non-dyadic lattice Hardy--Littlewood maximal operator (see Corollary \ref{corollary:weightedestimate}).

Combining Theorem \ref{theorem:domination} and Theorem \ref{theorem:sharpness} yields the following corollary, which has been mentioned above:
\begin{corollary}[Admissible exponents are determined by $q$-convexity]\label{corollary:sharpness}Let $E$ be a Banach lattice with the Hardy--Littlewood property and let $\mu$ be a locally finite Borel measure such that $\mu(\R^d)= \infty$.  Define
$$q^*:=\sup\{q\in(1,\infty) : \text{$E$ is $q$-convex}\}.$$
Then the sparse domination \eqref{eq:domination} holds for all $q\in[1,q^*)$ and fails for all $q\in(q^*,\infty)$.
\end{corollary}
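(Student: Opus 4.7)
The plan is to read off the corollary as a direct consequence of Theorems~\ref{theorem:domination} and~\ref{theorem:sharpness}, with the monotonicity of the sparse operator in its exponent providing the bridge between the two regimes.

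For the positive half, I would fix $q \in [1, q^*)$. Since $q$ is strictly less than the supremum $q^*$, the definition yields some $q' \in (q, \infty)$ for which $E$ is $q'$-convex. Because $E$ moreover has the Hardy--Littlewood property, Theorem~\ref{theorem:domination} applied with exponent $q'$ produces, for every finite dyadic collection $\mc{D}$ and every locally integrable $f\colon \R^d \to E$, a sparse subcollection $\mc{S} \subseteq \mc{D}$ for which
\[
\nrmb{\widetilde{M}^\mu_{\mc{D}} f(x)}_E \le C_{E,q'} \, \mc{A}^\mu_{q',\mc{S}}(\nrm{f}_E)(x) \quad \text{$\mu$-a.e.}
\]
Since $q \le q'$, the elementary $\ell^{q'} \hookrightarrow \ell^q$ embedding applied termwise to the nonnegative sequence $\bigl(\ip{\nrm{f}_E}^\mu_S \ind_S(x)\bigr)_{S \in \mc{S}}$ gives the pointwise inequality $\mc{A}^\mu_{q',\mc{S}}(\nrm{f}_E) \le \mc{A}^\mu_{q,\mc{S}}(\nrm{f}_E)$, upgrading the above bound to the sparse domination with exponent $q$.

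For the negative half, I would argue by contradiction: suppose that the sparse domination \eqref{eq:domination} holds for some $r \in (q^*, \infty)$. Theorem~\ref{theorem:sharpness}, applied with this $r$, forces $E$ to be $q$-convex for every $q \in [1, r)$. Choosing any $q \in (q^*, r)$ then exhibits a $q$-convex exponent strictly exceeding $q^*$, contradicting the very definition of $q^*$ as a supremum.

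There is essentially no obstacle to surmount here, as the two theorems do all the substantive work; the only point meriting care is the existence of the intermediate exponent $q'$ between $q$ and $q^*$ in the positive half, which is immediate from the definition of $q^*$ as a supremum (and, if desired, from the standard fact that $q'$-convexity implies $q$-convexity for $q \le q'$, which would let one transfer the theorem to $q$ directly, though this is not needed).
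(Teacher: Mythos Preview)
Your proposal is correct and matches the paper's approach: the paper does not give a separate proof for this corollary, stating only that it follows by combining Theorem~\ref{theorem:domination} and Theorem~\ref{theorem:sharpness}, which is precisely what you spell out. The one cosmetic simplification available is that, for $q\in(1,q^*)$, $q'$-convexity with $q'>q$ already implies $q$-convexity, so Theorem~\ref{theorem:domination} applies directly at exponent $q$ without invoking the $\ell^{q'}\hookrightarrow\ell^q$ monotonicity; you note this yourself, and your route via monotonicity has the minor advantage of handling $q=1$ uniformly.
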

\begin{remark}
A Banach lattice $E$ may be $q^*$-convex (for example, $\ell^q$) or may fail to be $q^*$-convex (for example, $L^{p,q}$ with $p <q$). If $E$ is $q^*$-convex, then the sparse domination \eqref{eq:domination} holds for $q^*$ by Theorem \ref{theorem:domination}. We do not know whether the converse of this holds: is it true that if the sparse domination \eqref{eq:domination} holds for $q^*$, then $E$ is $q^*$-convex; or in other words, is it true that if $E$ is not $q^*$-convex, then the sparse domination \eqref{eq:domination} fails for $q^*$?
\end{remark}

This article is organized as follows: We summarize the preliminaries in Section \ref{section:preliminaries}. We then prove that the domination implies the $q$-convexity (Theorem \ref{theorem:sharpness}) in Section \ref{section:sharpness}. Furthermore, we give an alternative proof of the result that the domination is implied by the $q$-convexity (Theorem \ref{theorem:domination}) in Section \ref{sec:sparse}, and mention weighted bounds (Corollary \ref{corollary:weightedestimate}) as its corollary in Section \ref{sec:weighted_estimates}. In Appendix \ref{sec:lpl1}, for the reader's convenience, we give a self-contained elementary proof of the well-known fact that the strong $L^p$-bound with $p\in(1,\infty)$ implies the weak $L^1$-bound for the dyadic lattice maximal operator (Proposition \ref{prop:calderonzygmund}). This fact is used in our proof of the sparse domination.

\textbf{Acknowledgement.} The authors thank Mark Veraar for his helpful comments on the draft.

\section{Preliminaries}\label{section:preliminaries}
Let $\mu$ be a locally finite Borel measure on $\R^d$, and $\mc{D}$ be a finite collection of dyadic cubes in $\R^d$. It is well-known that, for every $q\in(0,\infty)$, the sparse operator $\mc{A}^\mu_{q,\mc{S}}$, defined in the introduction, is bounded on $L^p(\mu)$ for every $p\in(1,\infty)$. This can be checked, for example, by using duality and the Hardy--Littlewood maximal inequality. Therefore, a necessary condition for the domination  \eqref{eq:domination}
is that the  dyadic lattice maximal operator $\widetilde{M}^\mu_{\mc{D}}$ is bounded on $L^p(\mu;E)$. In our context the most important measure is the Lebesgue measure, which leads us to consider the Banach lattices that have the {\it Hardy--Littlewood property}:

\begin{definition}[Hardy--Littlewood property]\label{def:hardy_littlewood_property}A Banach lattice $E$ has the {\it Hardy--Littlewood property} if for some $p\in(1,\infty)$, we have
\begin{equation}\label{eq:HLproperty}
  \sup_{\text{$\mc{D}$}} \nrmb{\widetilde{M}^{\mathrm{d}x}_{\mc{D}}}_{L^p(\dx;E)\to L^p(\dx;E)}<\infty,
\end{equation}
where the supremum is taken over all finite collections $\mc{D}$ of dyadic cubes and $\mathrm{d}x$ denotes the Lebesgue measure.
\end{definition}

\begin{remark} ~
\begin{itemize}
  \item By a covering argument using shifted dyadic systems (see for example \cite[Lemma 3.2.26]{HNVW16}), it is equivalent to take the supremum in \eqref{eq:HLproperty} over all finite collections of generic cubes or balls, in place of taking it over all finite collections of dyadic cubes over several dyadic systems.
  \item The Hardy-Littlewood property is independent of the exponent $p$  and of the dimension $d$ (see \cite[Remark 1.3 and Theorem 1.7]{GMT93} or \cite[Theorem 3]{DK17b}). The independence of the exponent $p$ also follows from the sparse domination (Theorem \ref{theorem:domination}), since the dominating sparse operator is bounded on $L^p$ for all $p\in(1,\infty)$.
  \item Among all the measures on $\R^d$, the norm of the lattice maximal operator with respect to the Lebesgue measure is the largest  (see \cite[Appendix A.2]{Ha17}), in that for every locally finite Borel measure $\mu$ and for every finite collection $\mc{D}$ of dyadic cubes, we have
$$
\nrmb{\widetilde{M}^\mu_{\mc{D}}}_{L^p(\mu;E)\to L^p(\mu;E)}\lesssim  \sup_{\text{$\mc{D'}$}} \nrmb{\widetilde{M}^{\mathrm{d}x}_{\mc{D'}}}_{L^p(\dx;E)\to L^p(\dx;E)}.
$$
\end{itemize}
\end{remark}

\begin{example}~
\begin{itemize}
  \item The Fefferman--Stein vector-valued maximal inequality states that the Banach lattice $\ell^q$ with $q\in(1,\infty]$ has the Hardy--Littlewood property.
  \item Every Banach lattice with  the $\UMD$ property (Unconditional Martingale Differences) has the Hardy-Littlewood property \cite{Bo84,Ru86}. The class of Banach lattices with the $\UMD$ property and hence with the Hardy--Littlewood property includes all reflexive Lebesgue, Lorentz and Orlicz spaces. For $\UMD$ spaces, see for example \cite[Chapter 4]{HNVW16}.
\end{itemize}
\end{example}

It is known that the domination \eqref{eq:domination} holds with the exponent $q=1$. This follows from viewing the operator $\widetilde{M}^\mu_{\mc{D}}$ as an instance of a singular integral operator or a discrete analogue of such, operators for which the domination with $q=1$ is known:
 \begin{itemize}
   \item $\widetilde{M}^\mu_{\mc{D}}$ can be viewed as a vector-valued singular integral (see \cite{GMT93, GMT98}). The sparse domination for vector-valued singular integrals follows by combining \cite[Theorem 2.10]{HH14} (dominating vector-valued singular integrals by more complex operators) and \cite[Theorem A]{CR16} (dominating the more complex operators by the sparse operator $\mc{A}^\mu_{1,\mc{S}}$).
   \item $\widetilde{M}^\mu_{\mc{D}}$ can be viewed as a vector-valued martingale transform (see \cite{MT00}). Vector-valued martingale transforms can be dominated by the sparse operator $\mc{A}^\mu_{1,\mc{S}}$ (see \cite[Theorem 2.4]{La17b}; for an alternative proof, see \cite[Proposition 2.7]{Ha17b}).
 \end{itemize}
As stated in Problem \ref{prob:main}, our purpose is to study whether the domination \eqref{eq:domination} holds with some strictly larger exponent $q\in(1,\infty)$. The critical notion for this is that of {\it $q$-convexity}:
  \begin{definition}[$q$-convexity]\label{def:q_convexity}
    We say that a Banach lattice $E$ is {\it $q$-convex}, with $q \in [1,\infty)$, if
$$
\nrms{\big(\sum_{k=1}^n \abs{e_k}^q\big)^{1/q}}_E\leq C_{E,q} \sum_{k=1}^n \big(\nrm{e_k}_E^q\big)^{1/q}
$$
for all $e_1,\cdots,e_n \in E$.
 \end{definition}
 Note that the expression $\big(\sum_{k=1}^n \abs{e_k}^q\big)^{1/q}$ can be defined pointwise in a Banach function space. In a general lattice it can be defined using the Krivine calculus (see for example \cite[Theorem 1.d.1]{LT79}).

Every Banach lattice with the Hardy--Littlewood property is $q$-convex for some $q>1$ \cite[Theorem 2.8]{GMT93}. Recall that, in the case that the measure $\mu$ is the Lebesgue measure, the Hardy--Littlewood property is necessary for the domination \eqref{eq:domination} to hold for any $q\in[1,\infty)$. Thus, in the case of the Lebesgue measure, if the domination $\eqref{eq:domination}$ holds for any exponent $q\in[1,\infty)$, then the lattice $E$ is $q$-convex for some $q\in(1,\infty)$.

\section{Domination exponent is determined by \texorpdfstring{$q$}{q}-convexity}\label{section:sharpness}
In this section we prove Theorem \ref{theorem:sharpness} from the introduction, which states the necessity of the $q$-convexity assumption for the domination \eqref{eq:domination} to hold:
\begin{theoremnumber}{\ref*{theorem:sharpness}}Let $E$ be a Banach lattice, let $\mu$ be a locally finite Borel measure such that $\mu(\R^d)= \infty$ and let $r \in (1,\infty)$. Assume that for each finite collection $\mc{D}$ of dyadic cubes and for each locally integrable function $f:\R^d\to E$ there exists a sparse collection $\mc{S}\subseteq \mc{D}$ such that
$$
\nrmb{\widetilde{M}^\mu_{\mc{D}}f(x)}_E \leq C_{E,q} \,\mc{A}^\mu_{r,\mc{S}}(\nrm{f}_E)(x), \qquad \text{$\mu$-a.e. $x\in \R^d$.}
$$
Then the Banach lattice $E$ is $q$-convex for all exponents $q\in[1,r)$.
\end{theoremnumber}

\begin{proof}
Let $Q_0$ be a dyadic cube such that $\mu(Q_0)>0$ and such that for any $C>0$ there exists a dyadic cube $Q'\supseteq Q_0$ with $\mu(Q')>C$, which is possible since $\mu(\R^d) = \infty$. Define recursively $Q_{k+1}$ as the minimal dyadic cube such that $Q_k \subseteq Q_{k+1}$ and $\mu(Q_k) \leq \frac{1}{2}\mu(Q_{k+1})$.

Fix $n \in \N$ and let $e_1,\cdots,e_n \in E$ be pairwise disjoint (i.e. $\inf\{e_j,e_k\}=0$ for all $1\leq j,k \leq n$), such that $\nrm{e_1} \leq \cdots \leq \nrm{e_n}$. Define $\mc{D} = \bigcup_{k=0}^n Q_k$ and $f = \sum_{k=1}^n \ind_{Q_k\setminus Q_{k-1}} e_k$. Let $\mc{S} \subseteq \mc{D}$ be sparse such that
\begin{equation}\label{eq:sparsedom}
  \nrm{M_{\mc{D}}f}_E \leq C_{E,r} \,\mc{A}^\mu_{r,\mc{S}}(\nrm{f}_E).
\end{equation}
$\mu$-almost everywhere and let $x_0\in Q_0$ be such that \eqref{eq:sparsedom} holds.  Note that
\begin{equation*}
  \ip{\abs{f}}_{Q_k}^\mu \geq \frac{\mu(Q_k \setminus Q_{k-1})}{\mu(Q_k)} \abs{e_k}  \geq \has{1-\frac{1}{2}} \abs{e_k} =\frac{1}{2} \abs{e_k}.
\end{equation*}
By the elementary relations
\begin{align*}
  e+e'&=\sup\{e,e'\}+\inf\{e,e'\}\\
  \inf\{ \sup\{e,e'\},e''\}&=\sup\{\inf\{e,e''\},\inf\{e',e''\}\}
\end{align*} for $e,e',e'' \in E$, the disjoint vectors $e_k$'s satisfy
$
\sum_{k=1}^n e_k=\sup_{1 \leq k \leq n} e_k.
$ Therefore,
\begin{equation}\label{eq:sumestimate}
  \nrms{\sum_{k=1}^n e_k}_E \leq \nrmb{\sup_{1 \leq k \leq n} \abs{e_k}}_E \leq 2\nrmb{\widetilde{M}^\mu_{\mc{D}}f(x_0)}_E
\end{equation}
Moreover, since $\nrm{e_1} \leq \cdots \leq \nrm{e_n}$, we have that
\begin{equation*}
  \ip{\nrm{f}_E}_{Q_k}^\mu = \frac{1}{{\mu(Q_k)} }\sum_{j=1}^{k} \mu(Q_j\setminus Q_{j-1})\nrm{e_j}_E \leq  \nrm{e_k}_E.
\end{equation*}
which yields
\begin{equation}\label{eq:Aqestimate}
   \mc{A}^\mu_{r,\mc{S}} \hab{\nrm{f}_E}(x_0) \leq \has{\sum_{k=1}^n \hab{\ip{\nrm{f}}_{Q_k}^\mu}^{r}}^{\frac{1}{r}} \leq \has{\sum_{k=1}^n \nrm{e_k}_E^{r}}^\frac{1}{r},
\end{equation}
Combining \eqref{eq:sparsedom}, \eqref{eq:sumestimate}  and \eqref{eq:Aqestimate}, we deduce that
\begin{equation*}
  \nrms{\sum_{k=1}^n e_k}_E \leq C_{E,r} \has{\sum_{k=1}^n \nrm{e_k}_E^{r}}^\frac{1}{r},
\end{equation*}
for all pairwise disjoint vectors $e_1,\cdots,e_n \in E$ such that $\nrm{e_1} \leq \cdots \leq \nrm{e_n}$ and therefore for every collection of pairwise disjoint vectors in $E$.
This is called an \textit{upper $r$-estimate} for $E$. By \cite[Theorem 1.f.7]{LT79}, this implies that $E$ is $q$-convex for all $q \in [1,r)$.
\end{proof}

\section{Sparse domination for \texorpdfstring{$q$}{q}-convex lattices}\label{sec:sparse}

In this section we prove Theorem \ref{theorem:domination} from the introduction, which states the sufficiency of the $q$-convexity for the domination \eqref{eq:domination} to hold:
\begin{theoremnumber}{\ref*{theorem:domination}}[Sparse domination for lattice maximal operator]
Let $E$ be a Banach lattice and let $\mu$ be a locally finite Borel measure. Assume that $E$ has the Hardy--Littlewood property and is $q$-convex for some $q\in(1,\infty)$. Then for each finite collection $\mc{D}$ of dyadic cubes and for each locally integrable function $f:\R^d\to E$ there exists a sparse collection $\mc{S}\subseteq \mc{D}$ such that
\begin{equation*}
\nrmb{\widetilde{M}^\mu_{\mc{D}}f(x)}_E \leq C_{E,q} \,\mc{A}^\mu_{q,\mc{S}}(\nrm{f}_E)(x), \qquad \text{$\mu$-a.e. $x\in \R^d$.}
\end{equation*}
\end{theoremnumber}

  Cruz-Uribe, Martell, and P\'{e}rez \cite[Lemma 8.1]{CMP12} proved this domination in the case where $\mu$ is the Lebesgue measure and $E=\ell^q$, which is a prototypical Banach lattice that has the Hardy--Littlewood property and is $q$-convex. Their proof extends to the case of general measures and general Banach lattices as follows. First, in place of the estimate $0\leq \max\{a,b\}-b\leq a$ for all positive reals $a,b$, one uses the estimate
$$
0\leq \nrm{\sup\{e_1,e_2\}}_E^q -\nrm{e_2}_E^q\leq  \nrm{e_1}_E^q
$$
for all positive vectors $e_1,e_2$ in a $q$-convex lattice $E$. This estimate holds provided that the constant $C_{E,q}$ in the definition of $q$-convexity equals one, which can be arranged by passing to an equivalent norm \cite[Theorem 1.d.8]{LT79}. Second, in place of the usual Lerner median oscillation decomposition \cite{Le10}, one uses its variant for general measures \cite[Theorem 1.2]{Ha17b}.

We give an alternative proof for the sparse domination. Our proof is elementary in that it uses neither Lerner's median oscillation decomposition, unlike the Cruz-Uribe--Martell--P\'{e}rez  proof, nor renorming of the lattice.
Our proof is via the technique of stopping cubes, using a lattice-valued generalization of the Muckenhoupt--Wheeden {\it principal cubes} stopping condition. The generalized stopping condition has been applied to characterize lattice-valued two-weight norm inequalities \cite{Ha17} and is likely to have also other applications in the lattice-valued setting.

The generalized stopping condition is as follows. Let $f:\R^d\to E_+$ be a non-negative (in the lattice sense) locally integrable function. In the generalized stopping condition, we choose the maximal dyadic subcubes  $S'\subseteq S$ that satisfy the stopping condition $$\nrms{\sup_{\substack{Q \in \mc{D} \\S' \subseteq Q\subseteq S}}\ip{f}^\mu_{Q}}_E > 2 \nrmb{\widetilde{M}^\mu_{\mc{D}}}_{L^1(\mu;E) \to L^{1,\infty}(\mu;E)} \ip{\nrm{f}_E}_S^\mu.$$
Note that in the scalar-valued case $E_+=\R_+$ this reduces to choosing the  maximal dyadic subcubes $S'\subseteq S$ such that
$$
 \ip{f}^\mu_{S'} > 2 \ip{f}_S^\mu;$$
this is the Muckenhoupt--Wheeden {\it principal cubes} stopping condition, which originally appeared in \cite[Equation 2.5]{MW77}.

\begin{proof}[Proof of Theorem \ref{theorem:domination} via the technique of stopping cubes]
Let  $f\colon \R^d \to E$ be a locally integrable function, which may be taken positive without loss of generality. For a cube $S \in \mc{D}$, we define its stopping children $\ch_{\mc{S}}(S)$ to be the collection of maximal (w.r.t. set inclusion) cubes $S' \in \mc{D}$ such that $S' \subsetneq S$ and the cube $S'$ satisfies the stopping condition
\begin{equation}\label{eq:stopping}
  \nrms{\sup_{\substack{Q \in \mc{D} \\S' \subseteq Q\subseteq S}}\ip{f}^\mu_{Q}}_E > 2 \nrmb{\widetilde{M}^\mu_{\mc{D}}}_{L^1(\mu;E) \to L^{1,\infty}(\mu;E)} \ip{\nrm{f}_E}_S^\mu.
\end{equation}
Let $\mc{S}_0 := \{Q \in \mc{D}: Q \text{ maximal}\}$ and define recursively $\mc{S}_{k+1}:= \bigcup_{S \in \mc{S}_k} \ch_{\mc{S}}(S)$. We set $\mc{S}: = \bigcup_{k=0}^\infty \mc{S}_k$. For each $Q \in \mc{D}$, we define its stopping parent $\parent_{\mc{S}}(Q)$ as
\begin{equation*}\parent_{\mc{S}}(Q) \\
= \cbrace{S \in \mc{S}:S \text{ minimal (w.r.t. set inclusion) such that } Q \subseteq S}.
\end{equation*}

  First, we show that the collection $\mc{S}$ of dyadic cubes is sparse. Fix $S \in \mc{S}$ and let $E_{S}:=S\setminus \bigcup_{S'\in\ch_{\mc{S}}(S)}S'$. Define the set
  \begin{equation*}
    S^* := \cbraces{x \in \R^d: \nrmb{\widetilde{M}^\mu_{\mc{D}}\ha*{ f\ind_S}(x) }_E  > 2 \nrmb{\widetilde{M}^\mu_{\mc{D}}}_{L^1(E) \to L^{1,\infty}(E)} \ipb{\nrm{f}_E}_S^\mu}.
  \end{equation*}
Note that by the definition of the weak $L^1$-norm we have
\begin{equation}\label{eq:temp1}
\mu(S^*)\leq \frac{1}{2} \mu(S).
\end{equation}
   Moreover, for $S' \in \ch_{\mc{S}}(S)$ and $x \in S'$, we have
  \begin{equation*}
    \nrms{\widetilde{M}^\mu_{\mc{D}}\ha*{ f\ind_S}(x)}_E = \nrms{\sup_{Q \in \mc{D}} \ip{f\ind_S}_Q^\mu \ind_{S'}(x) }_E \geq \nrms{\sup_{\substack{Q \in \mc{D} \\ S' \subseteq Q\subseteq S }}\ip{f}_{Q}^\mu}_E
  \end{equation*}
  so $x \in S^*$ by \eqref{eq:stopping} and thus $S' \subseteq S^*$. Using the disjointness of $\ch_{\mc{S}}(S)$ and \eqref{eq:temp1},  we get
  \begin{equation*}
    \sum_{S' \in \ch_{\mc{S}}(S)} \mu(S') \leq \mu(S^*) \leq \frac{1}{2} \mu(S).
  \end{equation*}
  So $\mu(E_{S}) \geq \frac{1}{2}\mu(S)$, which means that $\mc{S}$ is a sparse collection of dyadic cubes.

  Next, we check the pointwise estimate.  Fix $S \in \mc{S}$, $x \in S$ and let $S_x \in \mc{D}$ be the minimal (w.r.t. set inclusion) cube such that $x \in S_x$ and $\parent_\mc{S}(S_x)=S$. By the minimality, we have
  $$
        \nrms{\sup_{\substack{Q \in \mc{D}\\\parent_{\mc{S}}(Q)=S}}\ip{f}_{Q}^\mu\ind_{Q}(x)}_E=\nrms{\sup_{\substack{Q \in \mc{D} \\S_x \subseteq Q\subseteq S}}\ip{f}_{Q}^\mu}_E \ind_S(x).
$$
  and by the condition  $\parent_\mc{S}(S_x)=S$, we have
  $$
 \nrms{\sup_{\substack{Q \in \mc{D} \\S_x \subseteq Q\subseteq S}}\ip{f}_{Q}^\mu}_E \ind_S(x) \leq  2 \nrmb{\widetilde{M}^\mu_{\mc{D}}}_{L^1(\mu,E) \to L^{1,\infty}(\mu,E)} \ip{\nrm{f}_E}^\mu_S \ind_S(x).
  $$
  Altogether,
\begin{equation}\label{eq:pointwise}
 \nrms{\sup_{\substack{Q \in \mc{D}\\\parent_{\mc{S}}(Q)=S}}\ip{f}_{Q}^\mu\ind_{Q}(x)}_E\leq 2 \nrmb{\widetilde{M}^\mu_{\mc{D}}}_{L^1(E) \to L^{1,\infty}(E)} \ip{\nrm{f}_E}_S^\mu \ind_S(x).
\end{equation}
Now, we have
  \begin{align*}
    \nrmb{\widetilde{M}^\mu_{\mc{D}}f(x)}_E&= \nrms{\sup_{S \in \mc{S}} \sup_{\substack{Q \in \mc{D}\\\parent_{\mc{S}}(Q)=S}}\ip{f}_{Q}^\mu\ind_{Q}(x)}_E\\
    &\leq  \nrms{\has{\sum_{S \in \mc{S}} \hab{\sup_{\substack{Q \in \mc{D}\\\parent_{\mc{S}}(Q)=S}}\ip{f}_{Q}^\mu\ind_{Q}(x)}^q}^\frac{1}{q}}_E \qquad && \nrm{\cdotroomy}_{\ell^\infty} \leq \nrm{\cdotroomy}_{\l^q}\\
    &\leq  C_{E,q} \,\has{\sum_{S \in \mc{S}} \nrms{\sup_{\substack{Q \in \mc{D}\\\parent_{\mc{S}}(Q)=S}}\ip{f}_{Q}^\mu\ind_{Q}(x)}_E^q}^\frac{1}{q} &&\text{$q$-convexity of $E$}\\
    &\leq  C_{E,q} \, \nrmb{\widetilde{M}^\mu_{\mc{D}}} \has{\sum_{S \in \mc{S}} \hab{\ip{\nrm{f}_E}_{S}^\mu}^q\ind_{S}(x)}^\frac{1}{q} &&\text{\eqref{eq:pointwise}},
  \end{align*}
with $\nrmb{\widetilde{M}^\mu_{\mc{D}}} := \nrmb{\widetilde{M}^\mu_{\mc{D}}}_{L^1(\mu;E) \to L^{1,\infty}(\mu;E)}$. By Proposition \ref{prop:calderonzygmund}, we have
$$\nrmb{\widetilde{M}^\mu_{\mc{D}}}_{L^1(\mu;E) \to L^{1,\infty}(\mu;E)}\leq C_p  \nrmb{\widetilde{M}^\mu_{\mc{D}}}_{L^p(\mu;E) \to L^{p}(\mu;E)}$$
for every $p \in (1,\infty)$. By the remark after Definition \ref{def:hardy_littlewood_property}, we have
$$
\nrmb{\widetilde{M}^\mu_{\mc{D}}}_{L^p(\mu;E) \to L^{p}(\mu;E)}\leq  \sup_{\mc{D}'} \nrmb{\widetilde{M}^{\dx}_{\mc{D}'}}_{L^p(\dx;E) \to L^{p}(\dx;E)}.
$$
Note that the quantity $\sup_{\mc{D}'} \nrmb{\widetilde{M}_{\mc{D}'}^{\dx}}_{L^p(\dx;E) \to L^{p}(\dx;E)}$ is finite for some $p \in (1,\infty)$ by the assumption that $E$ has the Hardy-Littlewood property. This completes the proof of the theorem.
\end{proof}

\section{Weighted estimates for non-dyadic maximal functions}\label{sec:weighted_estimates}
As well-known, via the domination of an operator by sparse operators, the weighted bounds for sparse operator carry over to the dominated operator. In this section, we mention weighted bounds that carry over  via the domination from sparse operators to the  non-dyadic lattice Hardy--Littlewood maximal operator.

\subsection*{Non-dyadic lattice Hardy--Littlewood maximal operator}

We define the {\it non-dyadic lattice Hardy--Littlewood maximal operator} $\widetilde{M}^\mu$ as follows: for a locally integrable function $f\colon \R^d \to E$, we set
\begin{equation}\label{def:nondyadicmaximal}
\widetilde{M}^\mu f(x):=\sup_{Q} \ip{\abs{f}}_Q^\mu  \ind_Q(x),\qquad x \in \R^d,
\end{equation}
where the supremum is taken in the lattice sense over all cubes $Q \subseteq \R^d$ with sides parallel to the coordinate axes.

For this definition to make sense, the supremum needs to exist for $\mu$-a.e. $x \in \R^d$, and $\widetilde{M}^\mu f$ needs to be strongly $\mu$-measurable, i.e. it needs to be pointwise approximable by simple functions (see \cite[Chapter 1]{HNVW16} for more on strong measurability). This is the case if the Banach lattice is {\it order continuous}. (On order continuity, see for example \cite[Section 1.a]{LT79}.) Since, in particular, every reflexive Banach lattice is order continuous, this a rather general sufficient condition.

\begin{lemma}[Well-definedness of the non-dyadic lattice maximal operator]\label{lemma:welldefmaximal}Let $E$ be an order continuous Banach lattice and $\mu$ be a locally finite Borel measure.  Then for every simple function $f:\R^d \to E$ the maximal function $\widetilde{M}^\mu f$ exists and is strongly $\mu$-measurable.
\end{lemma}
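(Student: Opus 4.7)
My plan is to first trivialize the structure of the problem using that $f$ is simple, then handle existence using Dedekind completeness (which follows from order continuity), and finally deduce strong measurability by reducing the uncountable lattice supremum to a countable one and invoking the order-continuous norm convergence of monotone sequences.

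For the setup, I would write $f = \sum_{j=1}^n \ind_{A_j} e_j$ with $e_j \in E_+$ and $A_j$ pairwise disjoint Borel sets of finite $\mu$-measure, so that
\[
\ip{\abs{f}}_Q^\mu \;=\; \sum_{j=1}^n \alpha_j(Q)\, e_j, \qquad \alpha_j(Q) := \frac{\mu(A_j\cap Q)}{\mu(Q)} \in [0,1].
\]
Thus the family $\{\ip{\abs{f}}_Q^\mu \ind_Q(x) : Q \text{ a cube}\}$ is order-bounded by $e := \sum_{j=1}^n e_j$ for every $x$. Since $E$ is order continuous, it is Dedekind complete (a standard fact about order continuous Banach lattices), so the lattice supremum defining $\widetilde{M}^\mu f(x)$ exists as an element of $E$ for every $x \in \R^d$. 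This settles the existence part.

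For strong measurability, I would reduce the uncountable supremum to a countable one. Let $\mc{Q}_0$ be the countable family of open cubes with rational centers and rational side-lengths, and for each $x$ set $\mc{Q}_0(x) := \{Q \in \mc{Q}_0 : x \in Q\}$. I claim
\[
\widetilde{M}^\mu f(x) \;=\; \sup_{Q \in \mc{Q}_0(x)} \ip{\abs{f}}_Q^\mu.
\]
The inequality $\geq$ is immediate. For $\leq$, given any open cube $Q \ni x$ I would pick $Q_m \in \mc{Q}_0(x)$ with parameters converging to those of $Q$; when $\mu(\partial Q) = 0$ the functions $\ind_{Q_m}$ converge to $\ind_Q$ pointwise $\mu$-a.e., so dominated convergence gives $\alpha_j(Q_m)\to\alpha_j(Q)$ for each $j$ and hence $\ip{\abs{f}}_{Q_m}^\mu \to \ip{\abs{f}}_Q^\mu$ in norm, making $\ip{\abs{f}}_Q^\mu$ dominated by the countable supremum. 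The exceptional cubes $Q$ with $\mu(\partial Q) > 0$ are treated by perturbing slightly: since only countably many axis-parallel hyperplanes in each of the $d$ coordinate directions can carry positive $\mu$-mass (locally finiteness of $\mu$), $Q$ can be replaced by a slightly enlarged or shrunken cube $Q'$ with $\mu(\partial Q') = 0$ whose average approximates $\ip{\abs{f}}_Q^\mu$ arbitrarily well in each coordinate $\alpha_j$, reducing to the generic case. I expect this perturbation step to be the main technical obstacle.

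Once the claim is proved, I would enumerate $\mc{Q}_0 = \bigcup_k \mc{Q}_0^{(k)}$ with $\mc{Q}_0^{(k)}$ finite and increasing, set
\[
g_k(x) := \sup_{Q \in \mc{Q}_0^{(k)},\; x \in Q} \ip{\abs{f}}_Q^\mu,
\]
and observe that $g_k$ is a simple $E$-valued function: its range lies in the finite set of lattice suprema taken over subfamilies of $\mc{Q}_0^{(k)}$ applied to the simple function $\ip{\abs{f}}_{(\cdot)}^\mu$, and the sets on which it is constant are finite unions of cubes in $\mc{Q}_0^{(k)}$, hence Borel. The sequence $(g_k)$ is increasing, order-bounded above by $e$, and has pointwise lattice supremum $\widetilde{M}^\mu f$, so $\widetilde{M}^\mu f - g_k \downarrow 0$ in $E$ pointwise. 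Order continuity of $E$ then upgrades this to pointwise norm convergence $\nrm{\widetilde{M}^\mu f(x) - g_k(x)}_E \to 0$, exhibiting $\widetilde{M}^\mu f$ as the pointwise norm limit of a sequence of simple $E$-valued functions and hence strongly $\mu$-measurable.
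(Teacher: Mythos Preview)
Your argument is essentially correct but takes a genuinely different route from the paper's. The paper's proof is a two-line application of a structural fact: since $E$ is order continuous, the space $L^0(\mu;E)$ of strongly $\mu$-measurable $E$-valued functions is itself order complete (this is quoted from \cite{Gr82}); the family $\{\ip{\abs{f}}_Q^\mu \ind_Q\}_Q$ is order bounded in $L^0(\mu;E)$ by the constant function $\sum_k \abs{e_k}$, so its supremum exists directly as an element of $L^0(\mu;E)$, which is exactly the assertion that $\widetilde{M}^\mu f$ exists and is strongly measurable. No countable reduction, no approximation of cubes, no perturbation.

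Your approach instead works pointwise in $E$ (using that order continuous lattices are Dedekind complete) and then reconstructs strong measurability by hand via the countable rational-cube reduction and monotone norm convergence. This is more elementary in that it avoids the black-box theorem about $L^0(\mu;E)$, and it yields the pointwise (not merely essential) supremum; on the other hand it is considerably longer, and the perturbation step you flag is a real technicality whose details depend on the precise convention for ``cube'' (open, closed, half-open) and on how one handles boundaries carrying mass. That step can be made to work, but note that the norm-limit argument you use only gives $\ip{\abs{f}}_{\overline{Q}}^\mu \leq s$ when approximating from outside, so one has to be a bit careful that the relevant cube convention is covered. The paper's abstract route sidesteps all of this entirely.
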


\begin{proof}
Note that since $E$ is order-continuous, the space of all strongly $\mu$-measurable functions $L^0(\mu;E)$ is order-complete by \cite[Theorem 2.6]{Gr82}, i.e. every order bounded set in $L^0(\mu;E)$ has a supremum in $L^0(\mu;E)$.

Let $f\colon\R^d \to E$ be a simple function, that is, $f = \sum_{k=1}^n e_k \ind_{A_k}$ with $e_1,\cdots,e_n \in E$ and $A_1,\cdots,A_n \subseteq \R^d$ measurable, pairwise disjoint and $\mu(A_k)<\infty$ for $k=1,\cdots,n$.  Since we have for all cubes $Q \subseteq \R^d$ that
\begin{equation*}
  \ip{\abs{f}}_Q^\mu \ind_{Q} \leq \has{\sum_{k=1}^n \abs{e_k}} \ind_{\R^d} \in L^0(\mu;E),
\end{equation*}
it follows that
\begin{equation*}
\widetilde{M}^\mu f=\sup_{Q} \ip{\abs{f}}_Q^\mu  \ind_Q \in L^0(\mu;E)\qedhere
\end{equation*}
\end{proof}

\subsection*{Muckenhoupt weights} We now turn to the weighted estimates for the non-dyadic Hardy--Littlewood maximal operator. For this, we fix $\mu$ to be the Lebesgue measure $\dx$ and denote $\widetilde{M}:=\widetilde{M}^{\dx}$, $\widetilde{M}_{\mc{D}}:=\widetilde{M}^{\dx}_{\mc{D}}$, $\mc{A}_{q,\mc{S}}:= \mc{A}_{q,\mc{S}}^{\dx}$ and $\ip{\cdotroomy}_Q:= \ip{\cdotroomy}^{\dx}_Q$.

A \emph{weight} is a nonnegative locally integrable function $w \colon \R^d \to (0,\infty)$. For $p \in [1,\infty)$, the weighted Lebesgue--Bochner space $L^p(w;E)$ is the space of all $f \in L^0(\dx;E)$ such that
\begin{equation*}
  \nrm{f}_{L^p(w;E)}:= \has{\int_{\R^d} \nrm{f}^p_E w  \dx}^{1/p} < \infty.
\end{equation*}
For $p \in [1,\infty)$, the class of the \emph{Muckenhoupt $A_p$-weights} contains all weights $w$ such that
\begin{equation*}
  [w]_{A_p} := \sup_Q \ip{w}_Q \ipb{w^{-\frac{1}{p-1}}}_Q^{p-1} < \infty,
\end{equation*}
where the supremum is taken over all cubes $Q \subseteq \R^d$ with sides parallel to the coordinate axes, and where the second factor is replaced by $\nrm{w^{-1}}_{L^\infty(Q)}$ for $p=1$. For $p=\infty$, the class contains all weights such that
\begin{equation*}
  [w]_{A_\infty} = \frac{\int_Q M(w \ind_Q)  \dd x }{\int_Qw  \dd x } < \infty,
\end{equation*}
 where $M$ is the usual (scalar) Hardy-Littlewood maximal operator. We call $[w]_{A_p}$ the {\it $A_p$-characteristic} of $w$. For a general overview of Muckenhoupt weights, see \cite[Chapter 9]{Gr09}, and for an introduction to the $A_\infty$-characteristic, see \cite{HPR12} and the references therein.

\subsection*{Weighted bounds for maximal operators}As well-known, there are boundedly many shifted dyadic systems such that every cube is contained in some dyadic cube of comparable side length (see for example \cite[Lemma 3.2.26]{HNVW16}). Hence, as well-known, non-dyadic maximal operators can be dominated by dyadic maximal operators.
Via the domination of non-dyadic lattice maximal operators by dyadic lattice maximal operators and the domination of dyadic lattice maximal operators by sparse operators, the weighted bounds for sparse operator carry over to the non-dyadic lattice maximal operator.
In this way the weighted bounds for sparse operators from
\begin{itemize}
\item \cite[Theorem 1.1. and Theorem 1.2.]{HL18} in the case $L^p(w)\to L^p(w)$ and $L^p(w)\to L^{p,\infty}(w)$
\item \cite[Theorem 1.3]{FN17} in the case $L^1(w)\to L^{1,\infty}(w)$
\end{itemize} yield the following weighted estimates:

\begin{corollary}\label{corollary:weightedestimate}
  Let $E$ be an order-continuous Banach lattice. Assume that $E$ has the Hardy--Littlewood property and is thus $q$-convex for some $q\in(1,\infty)$. Then for all $p \in (1,\infty)$, $w \in A_p$ and $f \in L^p(w;E)$ we have
  \begin{align}
    \nrmb{\widetilde{M}f}_{L^p(w;E)} &\leq C_{E,p,q,d} \, \label{eq:strongApAinfty} [w]_{A_p}^\frac{1}{p}\hab{[w]_{A_\infty}^{\frac{1}{q}-\frac{1}{p}}+[w^{1-p'}]_{A_\infty}^{\frac{1}{p}}} \nrmb{f}_{L^p(w;E)} \\ &\leq \, C_{E,p,q,d}[w]_{A_p}^{\max\cbraceb{\frac{1}{p-1},\frac{1}{q}}}  \nrmb{f}_{L^p(w;E)} \label{eq:strongAp},
    \intertext{and if $p \neq q$ we have}
    \nrmb{\widetilde{M}f}_{L^{p,\infty}(w;E)} &\leq \label{eq:weakApAinfty} C_{E,p,q,d}\,[w]_{A_p}^{\frac{1}{p}}\hab{[w]_{A_\infty}^{\frac{1}{q}-\frac{1}{p}}+1}\nrmb{f}_{L^p(w;E)}  \\&\leq C_{E,p,q,d}\,[w]_{A_p}^{\max\cbraceb{\frac{1}{p},\frac{1}{q}}} \nrmb{f}_{L^p(w;E)}. \label{eq:weakAp}
  \intertext{If $w \in A_1$ and $f \in L^1(w;E)$ we have}\label{eq:weakA1}
    \nrmb{\widetilde{M}f}_{ L^{1,\infty}(w;E)} &\leq C_{E,d}\,[w]_{A_1}\hab{1+ \log([w]_{A_\infty})} \nrm{f}_{L^{1}(w;E)}
  \end{align}
\end{corollary}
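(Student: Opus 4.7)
The plan is to combine the pointwise sparse domination for the dyadic lattice maximal operator (Theorem~\ref{theorem:domination}) with the standard covering by finitely many shifted dyadic systems and with the cited weighted bounds for scalar sparse operators.

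First, I would reduce the non-dyadic operator to the dyadic one. Using the finitely many shifted dyadic systems $\mc{D}^{(\alpha)}$ from \cite[Lemma~3.2.26]{HNVW16}, every cube $Q \subseteq \R^d$ is contained in some $Q' \in \mc{D}^{(\alpha)}$ with $\l(Q') \leq C_d\,\l(Q)$, so $\ip{\abs{f}}_Q \leq C_d\,\ip{\abs{f}}_{Q'}$ in $E$. Taking suprema in the lattice sense yields the pointwise bound
$$
\widetilde{M}f \leq C_d \sum_{\alpha} \widetilde{M}_{\mc{D}^{(\alpha)}} f,
$$
where $\widetilde{M}_{\mc{D}^{(\alpha)}}$ is the maximal function over the full (infinite) system $\mc{D}^{(\alpha)}$; that this supremum is well-defined as an element of $L^0(\dx;E)$ follows from Lemma~\ref{lemma:welldefmaximal} together with order-continuity. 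For each $\alpha$ and each finite truncation $\mc{D}' \subseteq \mc{D}^{(\alpha)}$, Theorem~\ref{theorem:domination} produces a sparse $\mc{S}' \subseteq \mc{D}'$ with
$$
\nrmb{\widetilde{M}_{\mc{D}'}f}_E \leq C_{E,q}\,\mc{A}_{q,\mc{S}'}\hab{\nrm{f}_E} \qquad \text{a.e.}
$$

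Second, I would take the $L^p(w;E)$, $L^{p,\infty}(w;E)$, or $L^{1,\infty}(w;E)$ norm of both sides and invoke the cited weighted bounds for the scalar sparse operator $\mc{A}_{q,\mc{S}'}$ applied to $\nrm{f}_E \in L^p(w)$. Thus \cite[Theorems~1.1 and 1.2]{HL18} deliver the sharp $A_p$--$A_\infty$ strong- and weak-type bounds \eqref{eq:strongApAinfty} and \eqref{eq:weakApAinfty}, while \cite[Theorem~1.3]{FN17} delivers the endpoint bound \eqref{eq:weakA1}. The simpler inequalities \eqref{eq:strongAp} and \eqref{eq:weakAp} follow from \eqref{eq:strongApAinfty} and \eqref{eq:weakApAinfty} via the universal estimates $[w]_{A_\infty} \leq C_d\,[w]_{A_p}$ and $[w^{1-p'}]_{A_\infty} \leq C_d\,[w]_{A_p}^{1/(p-1)}$ together with routine algebra on the exponents, using $\max\{1/p,1/q\} = 1/q$ when $q \leq p$ and $\max\{1/(p-1),1/q\} = 1/(p-1)$ when $q \geq p$.

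Third, I would pass to the non-dyadic limit. The constants produced by Theorem~\ref{theorem:domination} and by the sparse bounds of \cite{HL18,FN17} depend only on $E$, $p$, $q$, $d$, and the weight characteristics, and in particular are uniform in the truncation $\mc{D}'$ and in the chosen sparse collection $\mc{S}'$. Hence for simple $f$, sending $\mc{D}'_n \nearrow \mc{D}^{(\alpha)}$ along an increasing sequence, order-continuity of $E$ gives $\widetilde{M}_{\mc{D}'_n}f \uparrow \widetilde{M}_{\mc{D}^{(\alpha)}}f$ in the lattice sense, and monotone convergence transfers the uniform norm bound to the full non-dyadic $\widetilde{M}f$. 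A standard density argument extends this from simple functions to all of $L^p(w;E)$ or $L^1(w;E)$.

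The main obstacle is really just the bookkeeping in this last limiting step: one must ensure that the supremum defining $\widetilde{M}f$ can genuinely be realized as an increasing limit over finite truncations of $\mc{D}^{(\alpha)}$ (where order-continuity of $E$ is essential) and that the norm bounds survive this limit. Everything else is either contained in Theorem~\ref{theorem:domination} or quoted from the scalar weighted theory for sparse operators in \cite{HL18,FN17}.
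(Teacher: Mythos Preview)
Your proposal is correct and follows essentially the same approach as the paper: reduce the non-dyadic maximal operator to dyadic ones via finitely many shifted dyadic systems, apply the sparse domination of Theorem~\ref{theorem:domination}, and then invoke the scalar weighted bounds for sparse operators from \cite{HL18} and \cite{FN17}. The paper treats the corollary as immediate from these ingredients and does not spell out the limiting and density arguments you describe, but your added detail is accurate and does not deviate from the intended route.
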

In the particular case $E=\ell^q$, the strong-type weighted bound \eqref{eq:strongAp} together with its sharpness was proved in \cite{CMP12}. After the appearance of this manuscript on arXiv, another manuscript appeared, in which the weighted bounds \eqref{eq:strongApAinfty} and \eqref{eq:weakApAinfty} for the lattice maximal operator were deduced independently in the particular case $E=\ell^q$, see \cite[Theorem 2]{CLPR17}.

\begin{remark}
 In the particular case $E=\ell^q$, the dependence on the $A_p$-characteristic is sharp both in the strong-type weighted estimate \eqref{eq:strongAp} (see \cite{CMP12}) and  in the weak-type weighted estimate  \eqref{eq:weakAp} (this follows from combining  \cite{CMP12} and \cite[Theorem 1]{PR17}).
In the general case that $E$ is Banach lattice that is $q$-convex for some $q\in(1,\infty)$, the exponent
  $$
  q^*:=\sup\{q\in(1,\infty) : \text{$E$ is $q$-convex}\}
  $$
  is critical: The strong-type weighted estimate \eqref{eq:strongAp} with the dependence $$[w]_{A_p}^{\max\cbraceb{\frac{1}{p-1},\frac{1}{q}}}$$ holds for all $q<q^*$ and fails for all $q>q^*$. Similarly, the weak-type \eqref{eq:weakAp} weighted estimate with the dependence $$[w]_{A_p}^{\max\cbraceb{\frac{1}{p},\frac{1}{q}}}$$ holds for all $q<q^*$ and fails for all $q>q^*$.
 This  follows from embedding a copy of $\ell^{q}_n$ with $q<q^*$ into the lattice $E$ for a large enough $n$ (by applying \cite[Theorem 1.f.12]{LT79}) and using the sharpness in the case $\ell^q_n$.
This sharpness for weighted estimates can be compared with the sharpness for domination, see Corollary \ref{corollary:sharpness}.
\end{remark}

\appendix

\section{Strong \texorpdfstring{$L^p$}{Lp}-bound  implies weak \texorpdfstring{$L^1$}{L1}-bound}\label{sec:lpl1}
As well-known, for the dyadic lattice Hardy Littlewood maximal operator the strong $L^p$-boundedness implies the weak $L^1$-boundedness. This result can be proven by viewing the lattice maximal operator as a vector-valued singular integral operator (see \cite{GMT93, GMT98}) and using the Calder\'on--Zygmund decomposition, or alternatively, by viewing the lattice maximal operator as a martingale transform (see \cite{MT00}) and using the Gundy decomposition. In this Appendix, we give an elementary proof of this result.

\begin{proposition}\label{prop:calderonzygmund}
  Let $E$ be a Banach lattice, $\mu$ a locally finite Borel measure, and $\mc{D}$ a finite collection of dyadic cubes. Then for all $p \in (1,\infty)$
   \begin{equation*}
     \nrmb{\widetilde{M}^\mu_{\mc{D}}}_{L^{1,\infty}(\mu;E)\to L^1(\mu;E)} \leq C_p  \nrmb{\widetilde{M}^\mu_{\mc{D}}}_{L^{p}(\mu;E)\to L^p(\mu;E)} .
   \end{equation*}
\end{proposition}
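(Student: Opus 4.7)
The plan is to run a Calder\'on--Zygmund stopping-cube argument combined with Chebyshev and the assumed strong $(p,p)$ bound. Fix $\lambda>0$ and $f\in L^1(\mu;E)$; the goal is a bound of the form $\mu(\{\|\widetilde{M}^\mu_{\mc{D}} f\|_E>\lambda\})\lesssim_{p} \|\widetilde{M}^\mu_{\mc{D}}\|_{L^p(\mu;E)\to L^p(\mu;E)}^{p}\,\lambda^{-1}\,\|f\|_{L^1(\mu;E)}$. I would begin with two preliminary reductions. By positivity and sublinearity of $\widetilde{M}^\mu_{\mc{D}}$ we may assume $f\geq 0$. Since $\mc{D}$ is finite, let $\mc{F}_N$ be the $\sigma$-algebra generated by the innermost cubes of $\mc{D}$; every $Q\in\mc{D}$ is $\mc{F}_N$-measurable, so replacing $f$ by $\E[f\mid\mc{F}_N]$ leaves $\widetilde{M}^\mu_{\mc{D}} f$ unchanged and does not increase $\|f\|_{L^1(\mu;E)}$. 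This reduction is crucial: afterwards $f$ is constant on each $\mc{F}_N$-atom $A$, giving the pointwise identity $\|f(x)\|_E=\langle\|f\|_E\rangle_{A_x}^\mu$ for the atom $A_x$ of $x$.

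Next, let $\mc{B}$ be the collection of maximal cubes $Q\in\mc{D}$ with $\langle\|f\|_E\rangle_Q^\mu>\lambda$ and set $\Omega:=\bigcup_{Q\in\mc{B}} Q$. Disjointness of $\mc{B}$ together with the stopping inequality gives the standard bound $\mu(\Omega)\leq\lambda^{-1}\|f\|_{L^1(\mu;E)}$. Split $f=g+b$ with $g:=f\ind_{\Omega^c}$ and $b:=f\ind_{\Omega}$. For the good part the reduction pays off immediately: if $x\in\Omega^c$ then the atom $A_x$ is neither in $\mc{B}$ nor contained in any cube of $\mc{B}$, so $\langle\|f\|_E\rangle_{A_x}^\mu\leq\lambda$, and the identity above forces $\|g(x)\|_E\leq\lambda$. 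Consequently $\|g\|_{L^p(\mu;E)}^p\leq\lambda^{p-1}\|f\|_{L^1(\mu;E)}$, and the strong $(p,p)$ hypothesis together with Chebyshev bounds $\mu(\{\|\widetilde{M}^\mu_{\mc{D}} g\|_E>\lambda/2\})$ by an admissible constant multiple of $\|\widetilde{M}^\mu_{\mc{D}}\|_{L^p\to L^p}^{p}\lambda^{-1}\|f\|_{L^1(\mu;E)}$.

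For $b$, the piece $\Omega\cap\{\|\widetilde{M}^\mu_{\mc{D}} b\|_E>\lambda/2\}$ is absorbed by $\mu(\Omega)$. For $x\in\Omega^c$ and $Q\in\mc{D}$ with $x\in Q$, dyadic nestedness forces every $Q'\in\mc{B}$ meeting $Q$ to satisfy $Q'\subseteq Q$ (otherwise $x\in Q\subseteq Q'\subseteq\Omega$), and therefore
\[
\langle|b|\rangle_Q^\mu=\frac{1}{\mu(Q)}\sum_{\substack{Q'\in\mc{B}\\Q'\subseteq Q}}\int_{Q'}|f|\,d\mu=\langle c\rangle_Q^\mu,\qquad c:=\sum_{Q'\in\mc{B}}\langle|f|\rangle_{Q'}^\mu\,\ind_{Q'}.
\]
Thus $\widetilde{M}^\mu_{\mc{D}} b(x)=\widetilde{M}^\mu_{\mc{D}} c(x)$ for $x\in\Omega^c$, so strong $(p,p)$ plus Chebyshev applied to $c$ reduces the problem to the $L^p$-bound $\|c\|_{L^p(\mu;E)}^p\lesssim_{p}\lambda^{p-1}\|f\|_{L^1(\mu;E)}$, equivalently $\sum_{Q'\in\mc{B}}\mu(Q')(\langle\|f\|_E\rangle_{Q'}^\mu)^p\lesssim_{p}\lambda^{p-1}\|f\|_{L^1(\mu;E)}$.

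This last estimate on the stopping-cube averages is the step I expect to be the main obstacle. In the doubling case it is routine: the dyadic parent $\widehat{Q'}$ of each $Q'\in\mc{B}$ is not in $\mc{B}$, so $\langle\|f\|_E\rangle_{\widehat{Q'}}^\mu\leq\lambda$, and doubling forces $\langle\|f\|_E\rangle_{Q'}^\mu\lesssim\lambda$; summing against $\mu(Q')\langle\|f\|_E\rangle_{Q'}^\mu\leq\int_{Q'}\|f\|_E\,d\mu$ yields the bound at once. For a general locally finite Borel measure this parent-comparison fails, and I would expect the remaining work to consist in exploiting both the disjointness of $\mc{B}$ and the $\mc{F}_N$-measurability obtained in the reduction in order to control the stopping-cube averages collectively rather than one by one.
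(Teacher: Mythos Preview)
Your outline is the classical Calder\'on--Zygmund decomposition, and it works perfectly for doubling $\mu$, but the step you flag as the ``main obstacle'' is a genuine obstruction in the non-doubling case: the estimate
\[
\sum_{Q'\in\mc{B}}\mu(Q')\bigl(\ip{\nrm{f}_E}_{Q'}^\mu\bigr)^p\;\lesssim_p\;\lambda^{p-1}\nrm{f}_{L^1(\mu;E)}
\]
is \emph{false} in general, and neither disjointness of $\mc{B}$ nor your $\mc{F}_N$-reduction can rescue it. Take $E=\R$, a chain $Q_0\subsetneq Q_1$ in $\mc{D}$ with $\mu(Q_0)=\epsilon$ and $\mu(Q_1)=1+\epsilon$, and $f=\epsilon^{-1}\ind_{Q_0}$ (already $\mc{F}_N$-measurable). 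With $\lambda=1$ one has $\ip{f}_{Q_1}^\mu=(1+\epsilon)^{-1}<1<\epsilon^{-1}=\ip{f}_{Q_0}^\mu$, so $\mc{B}=\{Q_0\}$, and the left side equals $\epsilon\cdot\epsilon^{-p}=\epsilon^{1-p}$ while the right side is $\lambda^{p-1}\nrm{f}_{L^1}=1$. Letting $\epsilon\to0$ breaks the inequality. Thus $\nrm{c}_{L^p(\mu;E)}$ is simply not under control, and the Chebyshev step for the bad part cannot close.

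The fix used in the paper is exactly the non-doubling Calder\'on--Zygmund idea of L\'opez-S\'anchez--Martell--Parcet: instead of placing the stopping averages back on the cubes $S\in\mc{S}$, one \emph{spreads} them over the dyadic parents $\hat S$ (taken in the full grid $\widetilde{\mc{D}}$, not in $\mc{D}$). Concretely, the paper bounds, for every $Q\in\mc{D}$,
\[
\ip{f}_Q^\mu\;\le\;\Bigl\langle\sum_{S\in\mc{S}}\tfrac{\mu(S)}{\mu(\hat S)}\ip{f}_S^\mu\,\ind_{\hat S}\Bigr\rangle_Q^\mu\;+\;\ip{f\ind_{\Omega^c}}_Q^\mu\;+\;\sum_{\substack{S\in\mc{S}\\ S\supseteq Q}}\ip{f}_Q^\mu,
\]
so that the lattice maximal function is dominated by $\widetilde{M}^\mu_{\mc{D}}(g_1+g_2)+b$ with $g_1:=\sum_{S}\tfrac{\mu(S)}{\mu(\hat S)}\ip{f}_S^\mu\ind_{\hat S}$, $g_2:=f\ind_{\Omega^c}$, and $b$ supported in $\Omega$. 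The parents $\hat S$ satisfy $\ip{\nrm{f}_E}_{\hat S}^\mu\le\lambda$ by maximality, and then a Carleson-type embedding (\cite[Lemma~3.3]{LMP14}) gives $\nrm{g_1}_{L^p(\mu;E)}^p\lesssim_p\lambda^{p-1}\nrm{f}_{L^1(\mu;E)}$. In the toy example above $g_1=(1+\epsilon)^{-1}\ind_{Q_1}$, whose $L^p$ norm stays bounded as $\epsilon\to0$, illustrating why the parent trick succeeds where $c$ fails. The missing idea in your proposal is precisely this redistribution to parents; once it is in place, the rest of your argument (Chebyshev on the good part, the trivial bound $\mu(\Omega)\le\lambda^{-1}\nrm{f}_{L^1}$ on the bad support) goes through unchanged.
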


\begin{proof}
    Fix $f \in L^1(\mu;E)$, which may be taken positive without loss of generality.
    Let $\widetilde{\mc{D}}$ be the dyadic grid such that $\mc{D} \subseteq \widetilde{\mc{D}}$ and for a cube $Q \in \widetilde{\mc{D}}$ let its dyadic parent $\hat{Q}$ be the minimal cube $Q' \in \widetilde{\mc{D}}$ such that $Q \subsetneq Q'$. Define for  $\lambda >0$
    \begin{equation*}
      \mc{S} := \cbrace{Q \in \widetilde{\mc{D}} \text{ maximal with } \ip{\nrm{f}_E}_Q> \lambda}.
    \end{equation*}
    We write $\Omega := \bigcup_{S \in \mc{S}} S$. For a fixed cube $Q \in \mc{D}$ we have
  \begin{align*}
    \ip{ f}_Q &= \sum_{\substack{S\in \mc{S}\\ S \subsetneq Q}} \ip{f\ind_{S}}_Q &&+ &&&&\sum_{\substack{S\in \mc{S}\\ S \supseteq Q}}\ip{f\ind_{S}}_Q &&&+ &&&&&\ip{f\ind_{\Omega^\comp}}_Q\\
    &\leq  \sum_{S\in \mc{S}} \frac{\mu\ha{S}} {\mu\ha{\hat{S}}} \ipb{\ip{f}_S\ind_{\hat{S}}}_Q &&+ &&&&\sum_{\substack{S\in \mc{S}\\ S \supseteq Q}}\ip{f}_Q &&&+ &&&&&\ip{f \ind_{\Omega^\comp}}_Q,
  \end{align*}
  as $\hat{S} \subseteq Q$ if $S \subsetneq Q$. Therefore, we have the decomposition
  \begin{equation}\label{eq:ourdecomposition}  \begin{split} \widetilde{M}^\mu_{\mc{D}} f &\leq \widetilde{M}^\mu_{\mc{D}}\has{\sum_{S \in \mc{S}} \frac{\mu\ha{S}}{\mu\ha{\hat{S}}} \ip{f}_S\ind_{\hat{S}}  + f \ind_{\Omega^\comp}} + \sup_{Q \in \mc{D}} \sum_{\substack{S\in \mc{S}\\ S \supseteq Q}} \ip{f}_Q \ind_Q
    \\ &=:\widetilde{M}^\mu_{\mc{D}}(g_1+g_2)+b.
  \end{split}
  \end{equation}

  Note that $b$ is supported on $\Omega$ and $\Omega=\{M_{\widetilde{\mc{D}}}(\nrm{f}_E) > \lambda\}$, where  $M_{\widetilde{\mc{D}}}$ is the usual dyadic (scalar) Hardy--Littlewood maximal operator over the dyadic grid $\widetilde{\mc{D}}$. By the weak $L^1$-boundedness of $M_{\widetilde{\mc{D}}}$ (see for example \cite{St93}), we have
  \begin{equation}\label{eq:bweakL1}
  \mu\hab{\nrm{b}_E>\lambda} \leq  \mu\hab{M_{\widetilde{\mc{D}}}(\nrm{f}_E) > \lambda} \leq \frac{1}{\lambda} \nrm{f}_{L^1(\mu;E)}.
  \end{equation}
 Since $\mc{S}$ is a family of disjoint dyadic cubes, we have by \cite[Lemma 3.3]{LMP14} that
  \begin{equation}\label{eq:g1bound}
  \begin{aligned}
    \nrm{g_1}_{L^p(E)}^p &\leq
    \int_{\R^d} \has{\sum_{S \in \mc{S}}  \frac{\mu\ha{S}}{\mu\ha{\hat{S}}}\ip{\nrm{f}_E}^\mu_S \ind_{\hat{S}} }^p  \dd x\\
    &\leq C_p
    \has{\sup_{S \in \mc{S}} \ip{\nrm{f}_E}^\mu_{\hat{S}}}^{p-1} \int_\Omega \nrm{f}_E \dd x \leq C_p \, \lambda^{p-1} \nrm{f}_{L^1(E)}.
    \end{aligned}
  \end{equation}
  By the Lebesgue differentiation theorem and the definition of $\Omega$, we have
  \begin{equation*}
    \nrm{g_2(x)}_E = \nrm{f(x)}_E \ind_{\Omega^c}(x) \leq \sup_{Q \in \widetilde{\mc{D}}: Q \subseteq \Omega^c}\ip{\nrm{f}_E}_Q  \leq \lambda
  \end{equation*}
  for $\mu$-a.e. $x \in \R^d$ and therefore
  \begin{equation}\label{eq:g2bound}
    \nrm{g_2}_{L^p(E)}^p \leq \lambda^{p-1} \nrm{f}_{L^1(\mu;E)}.
  \end{equation}
  Combining \eqref{eq:bweakL1}, \eqref{eq:g1bound} and \eqref{eq:g2bound} we obtain
  \begin{align*}
    \mu\ha*{\nrmb{\widetilde{M}^\mu_{\mc{D}}f}_E > 2\lambda} &\leq \mu\has{\nrmb{\widetilde{M}^\mu_{\mc{D}}(g_1+g_2)}_E > \lambda}+\mu\hab{\nrm{b}_E>\lambda} \\
    &\leq \nrmb{\widetilde{M}^\mu_{\mc{D}}f}_{L^{p,\infty}(\mu;E)\to L^{p,\infty}(\mu;E)}  \cdot \frac{\nrm{g_1+g_2}^p_{L^p(\mu;E)}}{\lambda^p} + \frac{1}{\lambda} \nrm{f}_{L^1(\mu;E)}\\
    &\leq C_p\frac{1}{\lambda} \nrmb{\widetilde{M}^\mu_{\mc{D}}f}_{L^{p,\infty}(\mu;E)\to L^{p,\infty}(\mu;E)}   \nrm{f}_{L^1(\mu;E)}\\
    &\leq C_p \frac{1}{\lambda} \nrmb{\widetilde{M}^\mu_{\mc{D}}f}_{L^{p}(\mu;E)\to L^{p}(\mu;E)}   \nrm{f}_{L^1(E)},
  \end{align*}
which completes the proof of the proposition.
\end{proof}

\begin{remark}
  The functions $g_1$ and $g_2$ are a subpart of the good part of the non-doubling Calder\'on--Zygmund decomposition \cite[Theorem 2.1]{LMP14}. Our decomposition \eqref{eq:ourdecomposition} can be viewed as a hands-on variant of that Calder\'on–-Zygmund decomposition.
\end{remark}

\bibliographystyle{plain}
\bibliography{sparsebib}

\end{document}